\newtheorem{definition}{Definition}[section]
\newtheorem{lemma}[definition]{Lemma}
\newtheorem{theorem}[definition]{Theorem}
\renewcommand{\pod}[1]{\mathchoice
  {\allowbreak \if@display \mkern 18mu\else \mkern 8mu\fi (#1)}
  {\allowbreak \if@display \mkern 18mu\else \mkern 8mu\fi (#1)}
  {\mkern4mu(#1)}
  {\mkern4mu(#1)}
}
\title{On the Real Roots of Domination Polynomials}
\author{Iain Beaton\thanks{Corresponding author}\\
\small Department of Mathematics \& Statistics\\[-0.8ex]
\small Dalhousie University\\[-0.8ex] 
\small Halifax, CA\\
\small\tt ibeaton@dal.ca\\
\and
Jason I. Brown\thanks{Supported by NSERC grant RGPIN-2018-05227}\\
\small Department of Mathematics \& Statistics\\[-0.8ex]
\small Dalhousie University\\[-0.8ex] 
\small Halifax, CA\\
\small\tt Jason.Brown@dal.ca\\
}
\begin{document}

\tikzset{bignode/.style={minimum size=3em,}}
\maketitle

\begin{abstract}
A dominating set $S$ of a graph $G$ of order $n$ is a subset of the vertices of $G$ such that every vertex is either in $S$ or adjacent to a vertex of $S$. The domination polynomial is defined by $D(G,x) = \sum d_k x^k$ where $d_k$ is the number of dominating sets in $G$ with cardinality $k$. In this paper we show that the closure of the real roots of domination polynomials is $(-\infty,0]$.
\end{abstract}

\setstretch{1.4}

\section{Introduction}\label{sec:intro}

For many graph polynomials, the location and nature of the roots have been (and continues to be) active areas of study. For example, the roots of chromatic polynomials (usually referred to as {\em chromatic roots}) have been of interest since the inception of chromatic polynomials, as the infamous Four Color Conjecture (now the Four Color Theorem) was equivalent to stating that $4$ is never a chromatic root of a planar graph.  While it is clear that the real chromatic roots are nonnegative (as the polynomial has coefficients that alternate in sign), it is not hard to show that $(0,1)$ is always a root-free interval for chromatic roots. Were there others? In fact, a combination of results by Thomassen \cite{thomassen} and Jackson \cite{jackson} proved that the closure of real chromatic roots is exactly the set $\{0,1\} \cup [32/27,\infty)$ (and hence, surprisingly, $(1,32/27)$ is chromatic root-free). For all-terminal reliability polynomials (the probability that a graph is connected, given that the edges are independently operational with probability $p$), the closure of their real roots \cite{1992BrownColbourn} is precisely $\{0,\} \cup [1,2]$. We remark that, in contrast to the real case, the closure of the {\em complex} chromatic roots is the entire complex plane \cite{2001Sokal}, while the closure of the complex all-terminal roots is not yet known (while it contains the unit disk centered at $z = 1$ \cite{1992BrownColbourn}, there are some roots just outside the disk \cite{2004RoyleSokal,2017BrownMolRel}).

Here we investigate the real roots of {\em domination polynomials}, the generating function for the number of dominating sets in a (finite, undirected) graph. For a graph $G$, a subset of vertices $S$ is a \emph{dominating set} of $G$ iff every vertex $v$ of $G$ is either in $S$ or adjacent to a vertex in $S$ (equivalently, $S$ is a dominating set if the \emph{closed neighbourhood} $N[v]$ of every vertex $v$ in $G$ has a non-empty intersection with $S$). The \emph{domination number} of $G$, denoted $\gamma(G)$, is the order of the smallest dominating set of $G$. For a thorough  discussion of dominating sets in graphs, see, for example, \cite{hedetniemi}.
Let $\mathcal{D}(G)$ denote the collection of dominating sets of $G$. Furthermore let $d_k(G) = |\{S \in \mathcal{D}(G): |S|=k\}|$. The \emph{domination polynomial} of $G$ is defined by

$$D(G,x) = \sum_{S \in \mathcal{D}(G)}x^{|S|} =  \sum_{k=\gamma (G)}^{|V(G)|} d_k(G)x^k.$$

The domination polynomials and their roots {\em domination roots}) have been of significant interest over the last 10 years(c.f.\ \cite{2012AlikhaniPHD}). Alikhani characterized graphs with two, three and four distinct domination roots \cite{2010DomRoots,2011AlikhaniDistinctRoots}. In \cite{2015Oboudi} Oboudi gave a degree dependent bound on the modulus of domination roots for a given graph. While Brown and Tufts \cite{2014BrownTufts} showed that domination roots are dense in the complex plane, what remains open is what the closure of the real domination roots might be. Figure \ref{fig:RealDomRoots} shows a plot of the real domination roots for all graphs of order 9. 

\begin{figure}[!h]
\centering
\includegraphics[scale=0.6]{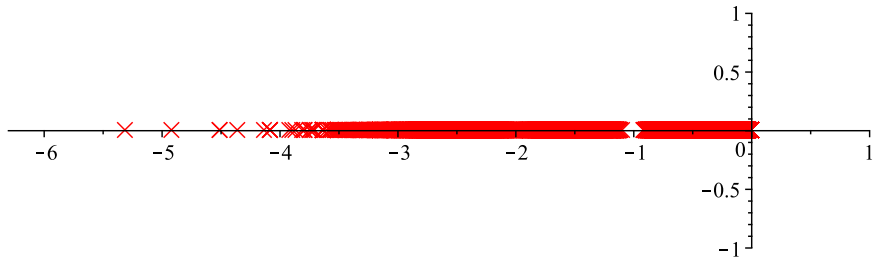}
\caption{The real domination roots for graphs of order 9}%
\label{fig:RealDomRoots}%
\end{figure}

\noindent From the plot we see that the points seem to be filling in the interval $(-4,0)$, but are more sparse to the left. Certainly, any real domination roots must be non-positive, as the polynomial has positive coefficients. Moreover, as the domination polynomial is monic, all rational roots are integers. The only known such roots are $0$ and $-2$, both for $D(K_2,x)$ (with $-1$ {\em never} a domination root \cite{2015Oboudi}), and $0$ and $-2$ are conjectured to be the {\em only} rational domination roots \cite{2010DomRoots}. With gaps at almost all rational numbers, it is natural to ask whether there are any domination root-free intervals of $(-\infty,0]$. We shall show that there are no such intervals, so the closure of the real domination roots are the entire nonpositive real axis.

\section{Closure of Real Domination Roots}

To prove our main result, we will need a graph operation, {\em graph substitution}. Let $G$ and $H$ be graphs. The graph $G[H]$, formed by substituting a copy of $H$ for every vertex of $G$, is constructed by taking a disjoint copy of $H$, $H_v$, for each vertex $v$ of $G$, and joining every vertex in $H_u$ to every vertex in $H_v$ if and only if $u$ is adjacent to $v$ in $G$. For example, the complete bipartite graph graph $K_{n,n}$ is the same as $K_{2}[\overline{K}_n]$. Domination polynomials are well-behaved with regards to graph substitution of complete graphs .

\begin{lemma}
\label{lem:GKrCompose}
\textnormal{\cite{2014BrownTufts}} Let $G$ be any graph and let $K_n$ be the complete graph on $n$ vertices. Then 
$$D(G[K_n],x)=D(G,(1+x)^n-1).$$
\end{lemma}

We now proceed to prove that real domination roots are dense in the negative real axis.

\begin{theorem}
\label{thm:RealClosure}
The closure of the real domination roots is $(-\infty,0]$.
\end{theorem}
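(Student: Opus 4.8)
The inclusion of the closure in $(-\infty,0]$ is immediate: since every coefficient $d_k(G)$ is a nonnegative integer and the extreme terms are positive, $D(G,x)>0$ for all $x>0$, so no positive reals occur. Thus the entire content is density, and I would aim to show that for every $t<0$ and every $\varepsilon>0$ there is a graph $G$ with a real domination root in $(t-\varepsilon,t+\varepsilon)$.

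The organizing device is Lemma~\ref{lem:GKrCompose}. Writing $u=1+x$, the identity $D(G[K_n],x)=D(G,(1+x)^n-1)$ says that, in the coordinate $u$, passing from $G$ to $G[K_n]$ replaces ``$u$ is a root'' by ``$u^n$ is a root''; equivalently, the real domination roots of $G[K_n]$ are exactly the real $n$-th roots (those lying in $u\le 1$) of the shifted roots $1+y_0$ of $G$, so the set of shifted real roots is closed under real $n$-th-root extraction for every $n$. Two features of this map guide the proof. First, real root extraction sends a value of modulus $>1$ to a value of modulus $>1$, and a value of modulus $<1$ to one of modulus $<1$: the substitution can never move a root across the circle $|u|=1$, i.e.\ across $x=0$ and $x=-2$. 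Second, iterating extraction from a single seed $u_0$ produces only the values $\pm|u_0|^{1/n}$, which accumulate at $|u|=1$; so a single graph contributes roots accumulating only at $x\in\{0,-2\}$.

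These observations split the target into $(-\infty,-2]$ and $(-2,0)$ and dictate two inputs. For the unbounded piece I would use the stars $K_{1,m}$, for which a direct count gives $D(K_{1,m},x)=x\bigl((1+x)^m+x^{m-1}\bigr)$. An intermediate-value argument on the bracket produces a real root $r_m\to-\infty$, and a short asymptotic analysis (the most negative root satisfies $-r_m\sim m/\ln m$) shows the consecutive gaps tend to $0$, so $\{r_m\}$ is dense in $(-\infty,-c]$ for some constant $c$. Because extraction preserves the region $|u|>1$, substituting $K_n$ into these stars keeps producing roots of $(-\infty,-2]$ and, with odd-$n$ extraction from the already-dense far-left roots, yields density throughout $(-\infty,-2]$.

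The main obstacle is the bounded interval $(-2,0)$, i.e.\ the region $|u|<1$: the substitution can neither reach it from the far left (it cannot cross $|u|=1$) nor fill it from one seed (single-seed extraction accumulates only at the endpoints), so an independent family with genuinely real roots dense in a subinterval of $(-1,0)$ is required. I would obtain this by analyzing a recursively defined family — the domination polynomials of paths or cycles, which satisfy a linear recurrence in $n$ with a cubic characteristic polynomial in the auxiliary variable — and proving, by exhibiting sign changes of $D(C_n,x)$ at a mesh of points that becomes arbitrarily fine, that its real roots are dense in an interval $I\subset(-1,0)$. The delicate point, exactly as in the chromatic analogue of Thomassen and Jackson \cite{thomassen,jackson}, is to control actual real roots rather than merely the complex limiting set predicted by Beraha--Kahane--Weiss. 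Granting such an $I$, root extraction spreads it: the positive branch $v\mapsto v^{1/n}$ sweeps $I$ across overlapping subintervals whose union exhausts $(\min I,0)$, after which the even-$n$ negative branch $v\mapsto -v^{1/n}$ transports the resulting dense set of $(-1,0)$ onto a dense set of $(-2,-1)$. Combining the two regions and passing to closures gives $(-\infty,0]$.
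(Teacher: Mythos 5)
Your outer frame coincides with the paper's: positivity of the coefficients disposes of $(0,\infty)$, Lemma~\ref{lem:GKrCompose} is read as closure of the shifted roots $u=1+x$ under real $n$-th-root extraction, and stars $K_{1,m}$ supply the far-left seeds. One correction in that part: a monotone sequence $r_m\to-\infty$ is a discrete set, so $\{r_m\}$ is \emph{not} ``dense in $(-\infty,-c]$''. What you actually have, and what the argument needs, is that consecutive gaps stay bounded (the paper bounds them by $4$ via the Lambert $W$ asymptotics; your observation that they even tend to $0$ is fine but not needed), while the pulled-back target interval $\bigl((z-\varepsilon+1)^m-1,\,(z+\varepsilon+1)^m-1\bigr)$, $m$ odd, has width tending to infinity, so for large $m$ it must capture some $r_k$. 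Phrased that way, your unbounded case is exactly the paper's Case 2 and is sound.

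The genuine gap is in $(-2,0)$, and it is twofold. First, the crux of your plan --- that the real roots of $D(C_n,x)$ or $D(P_n,x)$ are dense in some interval $I\subset(-1,0)$ --- is only asserted; the Thomassen-style control of actual real roots (uniform dominance of the complex eigenvalue pair, nonvanishing of its coefficient, counting sign changes) is precisely the hard part, and none of it is carried out. Second, and more structurally, even granting that claim your construction cannot finish unless $\inf I=-1$. By your own observation extraction cannot cross $|u|=1$; in fact for $|u|<1$ and $n\ge 2$ it strictly \emph{increases} $|u|$, i.e.\ inside $(-2,0)$ it moves roots strictly away from $-1$, toward $0$ (positive branch) or toward $-2$ (even negative branch). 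Hence every root manufactured from seeds in $I=(\alpha,\beta)$ with $\alpha>-1$ satisfies $|x+1|>1+\alpha$, so a whole punctured neighbourhood of $-1$ is permanently unreachable, and the set you transport to $(-2,-1)$ is dense at best in $\bigl(-2,\,-1-\sqrt{1+\alpha}\,\bigr)$, not in $(-2,-1)$. What the seed family really must provide is roots accumulating at $-1$ from both sides --- nothing more is needed, because for $z\in(-2,0)$ the pulled-back intervals accumulate at $-1$ and their union contains a one-sided punctured neighbourhood $(w,-1)$ or $(-1,w)$. The paper obtains exactly this input elementarily, with no recursive-family analysis: for odd $\ell$ large, $D(K_{2,\ell},-1)=1>0$ while $D(K_{2,\ell},-1-\delta)<0$, giving roots just left of $-1$; and for odd $k$ large, $D(K_{k,k},-1)=-1<0$ while $D(K_{k,k},-1+\delta)>0$, giving roots just right of $-1$. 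Replacing your cycle analysis by such an intermediate-value argument on a family whose roots converge to $-1$ is the missing (and, as the paper shows, much cheaper) ingredient.
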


\begin{proof}

Fix $z \in (-\infty,0]$ and $\varepsilon > 0$; we need to show that there is a domination root $z^\prime$ in the interval $(z - \varepsilon, z + \varepsilon)$. Without loss, we can assume that $z \neq -2,0$. Our proof will essentially be in two parts -- for $z \in (-2,0)$ and for $z \in (-\infty,-2)$. In either case, note that from Lemma~\ref{lem:GKrCompose}, if $z_1$ is a domination root of some graph $G$, then any solution of $(z+1)^m-1=z_1$ is a domination root (of the graph $G[K_n]$). If $m$ is an odd integer and $z_1<0$ is a domination root, then $(z_1+1)^{1/m}-1$ will be a real domination roots as well. Finally, $(z_1+1)^{1/m}-1 \in (z - \varepsilon, z + \varepsilon)$ iff $z_1 \in  ((z - \varepsilon+1)^m-1, (z + \varepsilon+1)^m-1)$, so it suffices to show that for some $m \geq 1$, the interval $((z - \varepsilon+1)^m-1, (z + \varepsilon+1)^m-1)$ contains a domination root.

\vspace{0.1in}

\noindent {\bf Case 1:} $\mathbf{z \in (-2,0)}$

\vspace{0.05in}

We shall consider two subcases that are similar in approach, splitting at $z = -1$.

\vspace{0.1in}
\noindent {\bf Subcase 1.1:} $\mathbf{z \in (-2,-1)}$
\vspace{0.05in}

We can assume that $z-\varepsilon > -2$ and $z + \varepsilon< -1$ by decreasing $\varepsilon$, so that both $(z - \varepsilon+1)^m-1$ and $(z + \varepsilon+1)^m-1$ are in $(-2,-1)$.
Observe that if we set $b = -(z - \varepsilon+1)$ and $a = -(z + \varepsilon+1)$, then $1 > b > a > 0$. Note that the interval $((z - \varepsilon+1)^m-1, (z + \varepsilon+1)^m-1) = (-b^m-1,-a^m-1)$ approach $-1$ from the left, that is, the intervals all lie to the left of $-1$, and both end points approach $-1$ as $m$ increases.  Moreover, as
\[ -b^{m+1}-1 < -a^m -1  \leftrightarrow b \left(\frac{b}{a} \right)^m > 1,\]
we conclude that if $m$ is large enough, the left end point of the next interval $(-b^{m+1}-1,a^{m+1})$ lies inside the previous interval $(-b^m-1,-a^m-1)$. It follows that the union of all the  intervals,
\[ \bigcup_{m} ((z - \varepsilon+1)^m-1, (z + \varepsilon+1)^m-1),\]
will contain an interval $(w,-1)$, with $w \in (-2,-1)$.

Now consider the domination polynomial of the complete bipartite graph $K_{k,\ell}$, which is clearly given by
\[ D(K_{k,\ell},x) = ((1+x)^k-1)((1+x)^\ell-1)+x^k+x^\ell,\]
so that 
\[ D(K_{2,\ell},x) = (1+x)^\ell (x^2+2x) + x^\ell - 2x.\]
Then 
$D(K_{2,\ell},-1) = (-1)^\ell + 2 = 1>0$. Now let $\ell$ be odd. For any $\delta \in (0,1)$,
\begin{eqnarray*}
D(K_{2,\ell},-1-\delta) & = & -\delta^\ell (\delta^2-1) - (1+\delta)^\ell + 2\delta,
\end{eqnarray*} 
which is negative for $\ell$ sufficiently large. Thus for $\ell$ large enough, there will be a real domination root in the interval $(-1-\delta,-1)$. By choosing $\delta=-w-1$ then there is a root in the interval $(w,-1) \subseteq \bigcup_{m} ((z - \varepsilon+1)^m-1, (z + \varepsilon+1)^m-1)$, so some interval $((z - \varepsilon+1)^m-1, (z + \varepsilon+1)^m-1)$ contains a real domination root.

\vspace{0.1in}
\noindent {\bf Subcase 1.2:} $\mathbf{z \in (-1,0)}$
\vspace{0.05in}

The proof of this subcase follows along that of the previous one. We can assume that $z-\varepsilon > -1$ and $z + \varepsilon< 0$, so that both $(z - \varepsilon+1)^m-1$ and $(z + \varepsilon+1)^m-1$ are in $(-1,0)$.
Observe that if we set $b = (z - \varepsilon+1)$ and $a = (z + \varepsilon+1)$, then $1 > a > b > 0$. Note that the interval $((z - \varepsilon+1)^m-1, (z + \varepsilon+1)^m-1) = (b^m-1,a^m-1)$ approach $-1$ from the right, that is, the intervals all lie to the right of $-1$, and both end points approach $-1$ monotonically as $m$ increases. Moreover, as
\[ b^{m}-1 < a^{m+1} -1  \leftrightarrow  1 < a \left(\frac{a}{b} \right)^m,\]
we conclude that if $m$ is large enough, the right end point of the next interval $(b^{m+1}-1,a^{m+1})$ lies inside the previous interval $(b^m-1,a^m-1)$. It follows that the union of all the intervals,
\[ \bigcup_{m} ((z - \varepsilon+1)^m-1, (z + \varepsilon+1)^m-1),\]
contains an interval $(-1,w)$, with $w \in (-1,0)$.


We again consider the domination polynomial of the complete bipartite graphs $K_{k,\ell}$, but with equal parts:
\[ D(K_{k,k},x) = (1+x)^{2k}-2(1+x)^k+2x^k+1.\]
Then for $k$ odd,
$D(K_{k,k},-1) = 1+2(-1)^k = -1 < 0$. For any $\delta \in (0,1)$,
\begin{eqnarray*}
D(K_{k,k},-1+\delta) & = & \delta^{2k} -2 \delta^k + 1 +  2(-1+\delta)^k,
\end{eqnarray*} 
which is positive for $k$ sufficiently large. Thus for $k$ large enough, there will be a real domination root in the interval $(-1, -1+\delta)$. By choosing $\delta=w+1$ then there is a root in the interval $(-1,w) \subseteq \bigcup_{m} ((z - \varepsilon+1)^m-1, (z + \varepsilon+1)^m-1)$, so some interval $((z - \varepsilon+1)^m-1, (z + \varepsilon+1)^m-1)$ contains a real domination root.

\vspace{0.1in}

\noindent {\bf Case 2:} $\mathbf{z \in (-\infty,-2)}$

\vspace{0.05in}

We can assume that $z+\varepsilon < -2$. Again, set $a = -(z + \varepsilon+1)$ and $b = -(z - \varepsilon+1)$; note that $b > a > 0 $. Note that the interval $((z - \varepsilon+1)^m-1, (z + \varepsilon+1)^m-1) = (-b^m-1,-a^m-1)$ has width 
\begin{eqnarray*} 
(z + \varepsilon+1)^m-1 - ((z - \varepsilon+1)^m-1) & = & b^m-a^m\\
 & = & 2(b-a) \left(b^{m-1} + b^{m-2}a + \cdots +  a^{m-1} \right)\\
 & \geq & 2\varepsilon m a^{m},
 \end{eqnarray*}
which is unbounded. Thus the width of the interval $((z - \varepsilon+1)^m-1, (z + \varepsilon+1)^m-1)$ can be arbitrarily large. We are seeking a domination root in this interval. If we can show that there is a sequence of real domination roots that tends to $-\infty$ such that the distance between successive roots is eventually bounded, then if $m$ is large enough, there will be a domination root in the interval $((z - \varepsilon+1)^m-1, (z + \varepsilon+1)^m-1)$ and we are done.

Now the domination polynomial of the star $K_{1,k}$ (yet another complete bipartite graph!) is, as noted earlier,
\[ D(K_{1,k},x) = x(x+1)^k+x^k.\]
Note that is we set $x = -R$, then 
\[ -R(1-R)^k+(-R)^k = (-1)^{k+1}(R(R-1)^k-R^k).\]
Thus setting $g_{k}(R) = R(R-1)^k-R^k$, we see that $R$ is a root of $g_k$ iff $-R$ is a root of $D(K_{1,k},x)$, so we turn our attention to $g_k$ for the time being. Note that $g_k(R) = 0$ iff 
\begin{eqnarray} 
\left( \frac{R}{R-1} \right)^k & = & R \label{mainR}
\end{eqnarray} 
Clearly on $(1,\infty)$, the left side of (\ref{mainR}), $\left( \frac{R}{R-1} \right)^k$, is a decreasing function of $R$ while the right side, $R$ is obviously increasing, there is exactly one solution to (\ref{mainR}), and hence exactly one root, say $r_k$, of $g_k$, in $(1,\infty)$ (it is the unique place where $g_k$ changes sign from negative to positive). Moreover, $r_{k+1} > r_{k}$, as 
\begin{eqnarray*} 
g_{k+1}(r_{k}) & = & r_{k}(r_{k}-1)^{k+1}-r_{k}^{k+1} \\
 & = & (r_{k}-1)^{k+1}\left( r_{k} - \left( \frac{r_{k}}{r_{k}-1}  \right)^{k+1} \right) \\
 & = &  (r_{k}-1)^{k+1}\left( r_{k} - \left( \frac{r_{k}}{r_{k}-1}  \right)^{k} \left( \frac{r_{k}}{r_{k}-1}\right)  \right)\\
 & <  & 0.
 \end{eqnarray*}
 
What about the differences between successive roots $r_{k}$? In \cite{genthetachrom} the unique root of $g_{k}$ in $(1,\infty)$ was denoted as $\mathcal{R}_{2,k}$. There it was shown that the asymptotics for $r_{k} = \mathcal{R}_{2,k}$ is given by 
\begin{eqnarray} 
r_{k} & = & \frac{k}{W(k)} + \frac{W(k)}{2(1+W(k))} + O\left( \frac{W(k)}{k} \right), \label{lambert}
\end{eqnarray}
where $W(k)$ denotes the well known {\em Lambert W function}, the inverse function to $f(w) = we^{w}$; this function is a strictly increasing of $x$, with $W(e) = 1$. 
It follows from (\ref{lambert}) that for sufficiently large $k$,
\begin{eqnarray*}
r_{k+1}-r_{k} < \frac{k+1}{W(k+1)} - \frac{k}{W(k)} + 1 + 1 < \frac{k+1}{W(k)} - \frac{k}{W(k)} + 2 < \frac{1}{W(k)} + 3 <  4
\end{eqnarray*}
Thus, returning back to the domination polynomial of stars, it follows that $-r_{1},-r_{2},\ldots,$ is a decreasing sequence of negative domination roots (of stars) that tend to $-\infty$, and that eventually have distance bounded between successive terms. It follows that {\em any} sufficiently large subinterval of the negative real axis will contain such a term, and thus we see that for large enough $m$, the interval $((z - \varepsilon+1)^m-1, (z + \varepsilon+1)^m-1)$ will contain (at least) one of these, and we have completed this case as well.

\vspace{0.1in}

In all cases, there is always a real domination root in an interval $((z - \varepsilon+1)^m-1, (z + \varepsilon+1)^m-1)$, so we conclude that the real domination roots are dense in $(-\infty,0]$.
\end{proof}


\section{Conclusion}

It may be interesting to further study the location of domination roots for various families of graphs. In particular, are the real domination roots of trees dense in $(-\infty,0]$? We have already seen in the proof of our main theorem that there are real domination roots of trees (namely stars) that are unbounded, bit we do not know if the closure is the entire nonpositive real axis..

Beyond the closure of domination roots, for each order $n$ it is natural to ask natural which graph has the smallest real domination root? It appears (see Table \ref{tab:maxmod}) that stars, which we used in case 2, have the extremal roots (and indeed the roots of largest modulus). The known approximations for the Lambert W function show that these roots of $K_{1,n}$ are roughly at $n/\ln n$.  

\begin{table}[!h]
\begin{center}
\begin{tabular}{| c | c | c |}
\hline
$n$ & Smallest real domination root (and root of maximum modulus) & Graph \\
\hline
$1$ & 0 & $K_1$\\ \hline
$2$ & 2 & $K_{1,1}$\\ \hline
$3$ & -2.618033989 & $K_{1,2}$\\ \hline
$4$ & -3.147899036 & $K_{1,3}$\\ \hline
$5$ & -3.629658127 & $K_{1,4}$\\ \hline
$6$ & -4.079595623 & $K_{1,5}$\\ \hline
$7$ & -4.506323246 & $K_{1,6}$\\ \hline
$8$ & -4.915076186 & $K_{1,7}$\\ \hline
$9$ & -5.309330065 & $K_{1,8}$\\ \hline

\end{tabular}
\end{center}
\caption{Smallest Domination Roots for $n \leq 9$}
\label{tab:maxmod}
\end{table}

\section*{Acknowledgements}
 
\noindent J. Brown acknowledges research support from Natural Sciences and Engineering Research Council of Canada (NSERC), grants RGPIN 2018-05227.


\bibliographystyle{plain}
\bibliography{DomRealRoots}

\end{document}